

\documentclass[a4paper,10pt,twoside]{article}

\usepackage[T1]{fontenc}
\usepackage[utf8]{inputenc}
\usepackage{amsfonts}
\usepackage{amssymb}
\usepackage{amsmath}
\usepackage{amsthm}
\usepackage{graphicx}

\usepackage{latexsym}

\usepackage{verbatim}

\addtolength\marginparwidth{-1.5cm}
\setlength\oddsidemargin{.in}
\addtolength\headheight{-.3in}
\addtolength\textwidth{4.5 cm}
\setlength\textheight{9in}
\setlength\topmargin{.2in}

\setlength{\oddsidemargin}{-15pt}
\setlength{\evensidemargin}{0pt}

\pagestyle{empty}
\makeatletter
\renewcommand{\@evenfoot}{\hfil - \thepage\ - \hfil}
\renewcommand{\@oddfoot}{\hfil - \thepage\ - \hfil}
\makeatother

\setlength{\parskip}{6pt plus 1pt minus 1pt}
\setlength{\parindent}{10pt}

\newcounter{theo}

\newtheorem{thm}{Theorem}
\newtheorem{coroll}{Corollary}
\newtheorem{lemme}{Lemma}

\usepackage{comment}

\newcommand{\bP}{\mathbb{P}}


\newcommand{\indic}{{\bf 1}} 






 


\newcommand{\limites}[2]{\overset{#1}{\underset{#2}{\longrightarrow}}}

\newcommand{\R}{\mathbb{R}}
\newcommand{\N}{\mathbb{N}}
\newcommand{\Z}{\mathbb{Z}}





\newcommand{\eqd}{\begin{eqnarray*}} 
\newcommand{\eqf}{\end{eqnarray*}} 


\usepackage[pdftex]{hyperref}
\hypersetup{
  pdftitle={Asymptotic direction of random walks in Dirichlet environment},
  pdfauthor={Laurent Tournier},
  pdfpagemode=None
}
\usepackage{enumerate}
\newcommand{\drift}{{\vec{\Delta}}}
\newcommand{\vecu}{{\vec{u}}}

\newcommand{\defeq}{\mathrel{\mathop:}=}
\newcommand{\defqe}{=\mathrel{\mathop:}}

\newcommand{\alphach}{{\check{\alpha}}}

\newcommand{\sigmach}{{\check{\sigma}}}
\newcommand{\omegach}{{\check{\omega}}}
\newcommand{\ech}{{\check{e}}}
\newcommand{\Ech}{{\check{E}}}

\newcommand{\Gch}{{\check{G}}}
\newcommand{\much}{{\check{\mu}}}

\newcommand{\probab}{ {\mathcal P}}
\newcommand{\V}{ {\mathcal V}}
\newcommand{\dir}{ {\mathcal D}}
\newcommand{\vece}{ {\vec e}}
\newcommand{\HH}{{\mathcal H}}

\DeclareMathOperator{\area}{Area}



\begin{document}



{
  \noindent\LARGE \textbf{Asymptotic direction of random walks\\ in Dirichlet environment}

\vspace{.5cm}
\Large
\noindent\textit{Laurent Tournier\footnote{
Universit\'e Paris 13; CNRS UMR 7539; LAGA; 99 avenue J.-B. Cl\'ement, F-93430 Villetaneuse, France\\
\emph{E-mail: }{\tt tournier@math.univ-paris13.fr}\\This work was partly supported by the french ANR project Mememo2}}
}
\vspace{1cm}

\small
\paragraph{Abstract}
In this paper we generalize the result of directional transience from~\cite{SabotTournier10}. This enables us, by means of~\cite{Simenhaus}, \cite{Zerner-Merkl} and \cite{Bouchet} to conclude that, on $\Z^d$ (for any dimension~$d$), random walks in i.i.d.\ Dirichlet environment --- or equivalently oriented-edge reinforced random walks --- have almost surely an asymptotic direction equal to the direction of the initial drift, i.e.~$\frac{X_n}{\|X_n\|}$ converges to $\frac{E_o[X_1]}{\|E_o[X_1]\|}$ as $n\to\infty$, unless this drift is zero. In addition, we identify the exact value or distribution of certain probabilities, answering and generalizing a conjecture of~\cite{SabotTournier10}. 

\vspace{1cm}

\normalsize
\vspace{-.5cm}

\section{Introduction}

\paragraph{Presentation of the model.} Let $d\geq 1$, and denote by $(\vece_1,\ldots,\vece_d)$ the canonical basis of $\R^d$. Define the set
\[\mathcal{V}\defeq \{\vec{e}_1,-\vec{e}_1,\ldots,\vec{e}_d,-\vec{e}_d\}\subset\Z^d,\]
which will be used as possible steps, and assume we are given weights $\alpha_{\vec{e}}>0$, for $\vec{e}\in\mathcal{V}$. 

Consider now the directed graph $\Z^d$ whose oriented edges are the pairs $e=(x,y)$ such that $\vec{e}\defeq y-x$ is an element of $\mathcal{V}$, endowed with (initial) weight
\[\alpha_e\defeq \alpha_{\vec{e}}\] 
and, for $x\in\Z^d$, define the law $P^{(\alpha)}_x$ of a random walk $(X_n)_{n\geq 0}$ on this graph in the following way: $P^{(\alpha)}_x$-a.s., $X_0=x$, and for every time $n\in\N$ and every edge $e$ starting at $X_n$, 
\begin{equation}\label{eq:def_oerrw}
P^{(\alpha)}_x\big( (X_n,X_{n+1})=e\big|X_0,\ldots,X_n\big)=\frac{\alpha_e+N_n(e)}{\sum\limits_{f : \underline{f}=X_n} \alpha_f+N_n(f)}
\end{equation}
where for an edge $e$ we let $e\defqe(\underline{e},\overline{e})$ and
\[N_n(e)\defeq\#\big\{0\leq i<n\ :\ (X_i,X_{i+1})=e\big\}.\]
Under $P_x^{(\alpha)}$, $(X_n)_{n\ge0}$ is called the \emph{oriented-edge reinforced random walk} (or more specifically the oriented-edge \emph{linearly} reinforced random walk) \emph{with initial weights $(\alpha_e)_e$, started at~$x$}. 

Due to the embedding of an independent Polya urn at each vertex and to a de Finetti property, this model admits an equivalent representation as a random walk in an i.i.d.~random environment given by Dirichlet random variables. Let us give a more precise statement. An \emph{environment} is an element $\omega=(\omega_x(\cdot))_{x\in\Z^d}$ of $\Omega\defeq\probab^{\Z^d}$ where $\probab$ is the simplex of probabilities on $\mathcal V$: 
\[\probab\defeq\Big\{(\omega(\vec e))_{\vec e\in\mathcal V}\,:\,\omega(\vec e)\geq 0,\ \sum_{\vec e\in\mathcal V}\omega(\vec e)=1\Big\}.\]
Given a starting point $x\in\Z^d$ and such an environment $\omega$, we may view $\omega$ as a set of transition probabilities (where $\omega_x(\vec e)$ is the transition probability from $x$ to $x+\vec e$) and define $P_x^\omega$ to be the law of the Markov chain starting at $x$ with transition probabilities given by $\omega$: for all $n\in\N$ and $\vec e\in\V$,  
\[P_x^\omega(X_{n+1}=X_n+\vec e\,|X_0,\ldots,X_n)=\omega_{X_n}(\vec e).\]
Finally, recall that the Dirichlet distribution $\dir^{(\alpha)}$  on $\probab$ with parameters $\alpha=(\alpha_{\vec e})_{\vec e\in\V}$ is the continuous probability distribution on $\probab$ given by
\[\dir^{(\alpha)}\defeq\frac{\Gamma(\sum_{\vec e\in\V}\alpha_{\vec e})}{\prod_{\vec e\in\V}\Gamma(\alpha_{\vec e})}\prod_{\vec e\in\V}p_\vece^{\alpha_\vece-1}\ {\rm d}\lambda(p),\]
where $\lambda$ is the Lebesgue measure on the simplex $\probab$, and denote by $\bP^{(\alpha)}=\big(\dir^{(\alpha)}\big)^{\otimes\Z^d}$ the law of an environment made of i.i.d.~Dirichlet marginals. Then we have the following identity (cf.~\cite{EnriquezSabot02} for instance):
\[P^{(\alpha)}_x(\cdot) = \int P^\omega_x(\cdot)\, {\rm d}\bP^{(\alpha)}(\omega).\] 

This representation constitutes the specificity of oriented-edge linear reinforcement and has been the starting point to prove several sharp results, in contrast to the still very partial understanding of either random walks in random environment or reinforced random walks in two and more dimensions. 

\paragraph{Context} Let us give a very brief account of the known results regarding transience before stating our result. 
We focus on the non-symmetric case, i.e.~when the weights are such that the \emph{mean drift}
\[\drift\defeq E^{(\alpha)}_o[X_1]= \frac{1}{\Sigma}\sum_{\vec{e}\in\mathcal{V}} \alpha_{\vec{e}}\,\vec{e}\]
is non-zero, where $\Sigma=\sum_{\vec{e}\in\mathcal{V}}\alpha_{\vec{e}}$. 

In any dimension, Enriquez and Sabot \cite{EnriquezSabot06} gave the first result that was specific to Dirichlet environments, namely a sufficient ballisticity condition and bounds on the speed, later improved in~\cite{Tournier09}. On the other hand, non-ballistic cases are known to occur when weights are sufficiently small, due to the non-uniform ellipticity of the Dirichlet law (cf.~\cite{Tournier09}). Yet, under the only assumption of non-symmetry ($\drift\neq\vec 0$) ---~and thus for both ballistic and zero-speed cases~---, Sabot and the author~\cite{SabotTournier10} showed that  the random walk is transient in the direction of a basis vector with positive probability.  

In dimension $d\geq 3$, Sabot (in \cite{Sabot09}) proved the transience of these random walks (including in the symmetric case) and (in \cite{Sabot10}) gave a characterization of the ballistic regime (viz., ballisticity occurs when $\drift\neq\vec 0$ and the exit time from any edge is integrable, i.e.~$\forall\vece\in\V$, $2\Sigma-\alpha_\vece-\alpha_{-\vece}>1$). Finally, Bouchet~\cite{Bouchet} recently proved that the methods of~\cite{Sabot10} extend to non-ballistic cases up to an acceleration of the walk, which implies a 0-1 law for directional transience. 






\subsection{Directional transience and asymptotic direction}

\begin{thm}\label{thm:main}
Assume $\drift\neq \vec0$. For any $\vecu\in\R^d$ with rational slopes such that $\vecu\cdot\drift>0$,

 \[P_o^{(\alpha)}\left(X_n\cdot\vecu\limites{}{n}+\infty\right)>0.\] 
\end{thm}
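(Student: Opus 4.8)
The plan is to reduce the statement to the main result of~\cite{SabotTournier10}, which handles the case $\vecu=\vece_i$, by a change of weights that makes a chosen coordinate direction play the role of $\vecu$. Concretely, write $\vecu$ with rational slopes as $\vecu=\frac{1}{M}(u_1,\ldots,u_d)$ with $u_i\in\Z$ and $M>0$; the event $\{X_n\cdot\vecu\to+\infty\}$ depends only on the signs of the $u_i$ and is unchanged under scaling, so one may assume $\vecu\in\Z^d$. The key observation is that $\{X_n\cdot\vecu\to+\infty\}$ has positive probability as soon as the walk is transient in the direction $\vecu$, and that directional transience is a zero-one event (by~\cite{Bouchet}, or for this conclusion one only needs positivity). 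So it suffices to find, inside the Dirichlet framework, a reason why the walk escapes to $+\infty$ along $\vecu$ with positive probability whenever $\vecu\cdot\drift>0$.

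First I would treat the base case where $\vecu$ is a coordinate vector: this is exactly~\cite{SabotTournier10}, giving $P_o^{(\alpha)}(X_n\cdot\vece_i\to+\infty)>0$ whenever $\alpha_{\vece_i}>\alpha_{-\vece_i}$, i.e.~whenever $\drift\cdot\vece_i>0$. For a general $\vecu$ with $\vecu\cdot\drift>0$, the idea is to pass to a sublattice or to a projected/time-changed walk for which $\vecu$ becomes a coordinate direction. The natural device is a linear change of coordinates: choose $A\in GL_d(\Q)$ (clearing denominators, $A\in GL_d(\Z)$ up to scaling issues one handles by looking at the induced walk on a finite-index sublattice) with $A^{\top}\vece_1$ proportional to $\vecu$, so that $X_n\cdot\vecu\to+\infty$ iff $(AX_n)\cdot\vece_1\to+\infty$. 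The walk $AX_n$ is a walk on the lattice $A\Z^d$ with steps $A\vece$, which is not nearest-neighbour, so one cannot apply~\cite{SabotTournier10} directly; instead one records only the successive visits to distinct hyperplanes $\{x\cdot\vecu = k\}$ and shows the induced process on these levels is transient to $+\infty$.

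The cleanest route, and the one I would pursue, avoids coordinate changes entirely and instead re-examines the martingale/flow argument of~\cite{SabotTournier10} with $\vecu$ in place of $\vece_i$. That argument produces, from the Dirichlet structure, an explicit computation of an exit probability from a large slab, via the ``$\alpha$-harmonic'' flow and the algebraic identity for Dirichlet laws (the Gamma-function normalisation). Here I would replace the slab $\{0\le x\cdot\vece_i\le L\}$ by the slab $S_L=\{0\le x\cdot\vecu\le L\}$, build the relevant unit flow from $o$ to the far face sustaining positive $\vecu$-drift on average (possible precisely because $\vecu\cdot\drift>0$, which guarantees the needed inequality $\sum_{\vece}\alpha_\vece(\vece\cdot\vecu)>0$ survives the truncation), and run the same Dirichlet integral identity to get a lower bound, uniform in $L$, for $P_o^{(\alpha)}(\text{walk exits }S_L\text{ through the far face before returning to }\{x\cdot\vecu<0\})$. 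Letting $L\to\infty$ and using that the walk a.s.\ leaves every slab (by recurrence of the environment marginals / Sabot's transience in $d\ge3$, and a separate easy argument in $d=1,2$), a Borel--Cantelli-type or direct nested-events argument yields $P_o^{(\alpha)}(X_n\cdot\vecu\to+\infty)>0$.

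The main obstacle is the geometry of the slab $S_L$ when $\vecu$ is not axis-parallel: the faces $\{x\cdot\vecu=0\}$ and $\{x\cdot\vecu=L\}$ are no longer ``flat'' with respect to the lattice, the boundary is a thickened hyperplane (of width comparable to $\|\vecu\|_1$), and the combinatorial flow construction underlying the Dirichlet computation in~\cite{SabotTournier10} must be adapted to this thickened boundary while keeping the flow value equal to the Dirichlet weights on each edge. Ensuring the key inequality (that the constructed flow can be chosen to have nonnegative divergence consistent with a drift toward the far face) holds for every $\vecu$ with $\vecu\cdot\drift>0$, and that the resulting lower bound does not degenerate to $0$ as $L\to\infty$, is where the real work lies; the rest is bookkeeping and a standard reduction from ``positive exit probability for all $L$'' to ``positive probability of transience''.
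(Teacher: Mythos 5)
Your second route is essentially the paper's argument: one periodizes the slab $\{0\le x\cdot\vecu\le L\|\vecu\|^2\}$ transversally into a finite cylinder $C_{N,L}$, closes it up into a finite graph $G_{N,L}$ with two extra vertices so that the edge weights become divergence-free (the closing weight is positive precisely because the flux $\sum_\vece\alpha_\vece(\vece\cdot\vecu)=\Sigma\,\vecu\cdot\drift>0$, which is your ``key inequality''), and applies the cycle time-reversal lemma of~\cite{SabotTournier10} to compute the probability of exiting through the far face as an explicit ratio that is uniform in $N$ and $L$, after which one lets $N,L\to\infty$ and invokes a 0--1 law plus Lemma~4 of~\cite{Zerner-Merkl}. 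Your phrase ``martingale/flow argument'' is a slight misnomer --- the engine in~\cite{SabotTournier10} and here is cycle reversal under divergence-free weights, not a martingale --- but the role you assign to the flux condition, the thickened-boundary geometry, and the limiting step all coincide with the paper's proof.
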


This theorem was proved in~\cite{SabotTournier10} in the case when $\vecu=\vec{e}_i$. The interest in the present refinement lies in the corollary below, obtained by combining the theorem with the 0-1 laws of~\cite{Zerner-Merkl} ($d=2$) and of the recent~\cite{Bouchet} ($d\geq 3$) together with the main result of~\cite{Simenhaus}. (Details follow.) 

\begin{coroll}
Assume $\drift\neq\vec0$. Then, $P^{(\alpha)}_o$-a.s., the walk has an asymptotic direction that is given by the direction of the mean drift:
\[\frac{X_n}{\|X_n\|}\limites{}{n} \frac{\drift}{\|\drift\|},\qquad P^{(\alpha)}_o\text{-a.s.}\]
\end{coroll}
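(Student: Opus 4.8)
The plan is to combine Theorem~\ref{thm:main} with the directional $0$-$1$ law --- which converts its positive-probability conclusion into an almost sure one --- and then to invoke Simenhaus's criterion \cite{Simenhaus} for the existence of a deterministic asymptotic direction, the role of Theorem~\ref{thm:main} being to supply sufficiently many transient directions. Throughout one uses the identity $P_o^{(\alpha)}(\cdot)=\int P_o^\omega(\cdot)\,{\rm d}\bP^{(\alpha)}(\omega)$, so that under $P_o^{(\alpha)}$ the walk is a random walk in the i.i.d.\ environment $\bP^{(\alpha)}$; this environment is elliptic --- for $\bP^{(\alpha)}$-a.e.\ $\omega$ and every $\vece\in\V$ one has $\omega_o(\vece)>0$, since $\{\omega_o(\vece)=0\}$ is a $\lambda$-negligible face of $\probab$ --- although it is not uniformly elliptic. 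The case $d=1$ being classical, assume $d\ge 2$.

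First we would upgrade Theorem~\ref{thm:main} to an almost sure statement. For $\ell\in\R^d\setminus\{\vec0\}$ put $A_\ell\defeq\{X_n\cdot\ell\to+\infty\}$, an event depending only on the direction of $\ell$. The directional $0$-$1$ law --- \cite{Zerner-Merkl} for $d=2$ and \cite{Bouchet} for $d\ge3$ --- gives $P_o^{(\alpha)}(A_\ell)\in\{0,1\}$ for every such $\ell$. Hence, for every $\vecu\in\R^d$ with rational slopes and $\vecu\cdot\drift>0$, Theorem~\ref{thm:main} (which only asserts $P_o^{(\alpha)}(A_\vecu)>0$) in fact forces $P_o^{(\alpha)}(A_\vecu)=1$. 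Consequently $\dir\defeq\{\ell\in\Sd:P_o^{(\alpha)}(A_\ell)=1\}$ contains every rational-slope direction $\ell\in\Sd$ with $\ell\cdot\drift>0$, and these are dense in the open hemisphere $\HH\defeq\{\ell\in\Sd:\ell\cdot\drift>0\}$; in particular $\dir\neq\emptyset$.

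Having established $\dir\neq\emptyset$, pick $\ell_0\in\dir$ and note that $A_{\ell_0}\subseteq\{\|X_n\|\to+\infty\}$, so $\|X_n\|\to+\infty$ $P_o^{(\alpha)}$-a.s.; then Simenhaus's theorem \cite{Simenhaus} --- which applies to elliptic i.i.d.\ environments once the directional $0$-$1$ law holds and $\dir\neq\emptyset$ --- produces a deterministic $\vecv\in\Sd$ with $\frac{X_n}{\|X_n\|}\to\vecv$ $P_o^{(\alpha)}$-a.s. It remains to identify $\vecv$. Choose a countable dense subset $U\subset\HH$ of rational-slope directions; by the previous step, $P_o^{(\alpha)}$-a.s.\ one has $X_n\cdot\vecu\to+\infty$ simultaneously for all $\vecu\in U$, and comparing this with $\frac{X_n}{\|X_n\|}\to\vecv$ gives $\vecv\cdot\vecu\ge0$ for every $\vecu\in U$, hence $\vecv\cdot\ell\ge0$ for every $\ell\in\Sd$ with $\ell\cdot\drift\ge0$ by density and continuity. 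Applying this to both $\ell$ and $-\ell$ for $\ell\in\Sd\cap\drift^\perp$ forces $\vecv\perp\drift^\perp$, so $\vecv=\pm\drift/\|\drift\|$; and $\vecv\cdot\drift\ge0$ then selects $\vecv=\drift/\|\drift\|$, which is the assertion of the corollary.

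The step expected to be the real obstacle is the application of the cited theorems: one must verify that \cite{Simenhaus} requires only ellipticity --- which Dirichlet environments have --- and not uniform ellipticity --- which they lack --- and that the $0$-$1$ laws of \cite{Zerner-Merkl} and \cite{Bouchet} are available in the \emph{strong} form $P_o^{(\alpha)}(A_\ell)\in\{0,1\}$, for arbitrary and not merely rational directions $\ell$, rather than only in the weaker symmetric form $P_o^{(\alpha)}(A_\ell\cup A_{-\ell})\in\{0,1\}$, which alone would not exclude $P_o^{(\alpha)}(A_\vecu)=P_o^{(\alpha)}(A_{-\vecu})=\tfrac12$; for $d\ge3$ one should moreover recall that the walk is $P_o^{(\alpha)}$-a.s.\ transient (\cite{Sabot09}), which is what makes the $0$-$1$ law of \cite{Bouchet} effective. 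Everything else --- the density argument and the linear-algebra identification of $\vecv$ --- is routine.
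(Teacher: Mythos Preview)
Your proposal is correct and follows essentially the same route as the paper: upgrade Theorem~\ref{thm:main} to an almost-sure statement via the $0$-$1$ laws of \cite{Zerner-Merkl} and \cite{Bouchet}, invoke \cite{Simenhaus} to obtain a deterministic asymptotic direction $\vecv$, and then pin down $\vecv$ from the inequalities $\vecv\cdot\vecu\ge0$. The only cosmetic difference is that the paper passes from rational-slope directions to the full open half-space by a convexity argument (the set of $\vecu$ with $P_o^{(\alpha)}(A_\vecu)=1$ is convex), whereas you use density of rational-slope directions plus continuity; both work, and your explicit flagging of the ellipticity and strong-$0$-$1$-law hypotheses is, if anything, more scrupulous than the paper's treatment.
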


\paragraph{Remarks.}
\begin{itemize}
	\item In~\cite{EnriquezSabot06}, Enriquez and Sabot gave an expansion of the speed as $\gamma\to\infty$ when the parameters are $\alpha^{(\gamma)}_i:=\gamma\alpha_i$, and noticed that the second order was surprisingly colinear to the first one, i.e.\ to~$\drift$. This is not anymore a surprise given the above corollary; but this highlights the fact that the simplicity of the corollary comes as a surprise itself. Correlations between the transition probabilities at one site indeed affect the speed (cf.~for instance~\cite{Sabot04}), and thus the speed of a random walk in random environment is typically not expected to be colinear with the mean drift, if not for symmetry reasons. 
	\item The theorem does actually not depend on the graph structure of $\Z^d$ besides translation invariance, meaning that the result also holds for non nearest neighbour models: we may enable $\mathcal{V}$ to be any finite subset of $\Z^d$, and the proof is written in such a way that it covers this case. The same is true for the main results of~\cite{Bouchet} and~\cite{Simenhaus} with little modification, hence the corollary also generalizes in this way in dimension $\ge3$. The intersection property for planar walks used in~\cite{Zerner-Merkl} may however fail if jumps are allowed in such a way that the graph is not anymore planar. But if it is planar, then the proof carries closely. This includes in particular the case of the triangular lattice (by taking $\mathcal{V}=\{\pm\vec e_1,\pm\vec{e}_2,\pm(\vec e_1+\vec e_2)\}$). 
	\item Using the above-mentioned 0-1 laws, the probability in the theorem equals 1 and the rationality assumption is readily waived; the theorem was stated this way in order to keep its proof essentially contained in the present paper, in contrast to its corollary.  
	\item As a complement to the theorem, note that Statement (d) of Theorem 1.8 of~\cite{DrewitzRamirez} (and Lemma 4 of~\cite{Zerner-Merkl}) implies that in any dimension, when $\vecu\cdot\drift=0$, $P_o^{(\alpha)}$-a.s., $\limsup_n X_n\cdot\vecu=+\infty$ and $\liminf_n X_n\cdot\vecu=-\infty$. In dimension at least 3, this is showed in~\cite{Bouchet} as well. 
	\item Theorem 2 of~\cite{Bouchet} also implies the existence of a deterministic yet unspecified asymptotic direction in dimension at least 3. Further remarks regarding the derivation of the corollary from the theorem are deferred to the end of the proof. 
\end{itemize}


\subsection{Identities} 
The proof of Theorem 1 goes through proving a lower bound on the probability that the walk never leaves the half-space $\{x:x\cdot\vecu\ge0\}$.  
In the next theorem, this lower bound is proved to be an equality. 

Although the result admits a simple statement in some interesting cases (cf.~\eqref{eqn:conjecture} on page~\pageref{eqn:conjecture}), we need to introduce further notation to deal with general directions. 

Let $\vecu\in\R^d$ be a vector with rational slopes such that $\vecu\cdot\drift>0$. Due to periodicity, the ``discrete half-space'' $\{x\in\Z^d:x\cdot\vecu\ge0\}$ only has finitely many different entry points modulo translation. We shall denote by $\HH_0$ an arbitrary set of representative entry points and by $\mu\,(=\mu^{(\alpha,\vecu)})$ the probability measure on $\HH_0$ which makes $\mu(x)$ proportional to the total weight that enters vertex $x$ from outside of the previous half-space. Let us alternatively give more formal definitions. 

Up to multiplication by a positive number, we may assume $\vecu\in\Z^d$. We extend $\vecu$ into a basis $(\vecu,\vecu_2,\ldots,\vecu_d)$ chosen in such a way that $\vecu_i\perp\vecu$ and $\vecu_i\in\Z^d$ for all $i$. Since the ``discrete half-spaces''
\[\mathcal{A}_x\defeq\{y\in\Z^d\,:\,y\cdot\vecu\ge x\cdot\vecu\}\]
satisfy $\mathcal{A}_x=\mathcal{A}_{x\pm\vecu_i}$ for all $i\geq2$, $\mathcal A_x$ takes only finitely many different values when $x$ is in the ``discrete hyperplane''
\[\HH\defeq\big\{x\in\Z^d\,:\,\exists \vec e\in\mathcal{V},\,(x-\vec e)\cdot\vecu<0\leq x\cdot\vecu\},\]
namely for instance each of the values obtained when $x$ belongs to the finite set
\[\HH_0\defeq \HH\cap\big(\R_+\vecu+[0,\vecu_2)+\cdots+[0,\vecu_d)\big).\]
We then define the probability measure $\mu\,(=\mu^{(\alpha,\vecu)})$ on $\HH_0$ as follows: for all $x\in\HH_0$,
\begin{equation}\label{def:mu}
\mu(x)\defeq \frac{1}{Z}\sum_{\substack{\vec e\in\mathcal{V}:\\(x-\vec e)\cdot\vecu<0}}\alpha_{\vec e}
\end{equation}
where $Z$ is a normalizing constant (we have $Z>0$ because $\drift\cdot\vecu>0$). 

Let us also define a quenched analogue to $\mu$. We enlarge $\Omega$ by adding a component $\omega(\partial,\cdot)$ to each environment $\omega$, where $\omega(\partial,\cdot)$ is a probability distribution on $\HH_0$. And we extend $\bP^{(\alpha)}$ so that $\omega(\partial,\cdot)$ is independent of $(\omega_x(\cdot))_{x\in\Z^d}$ and follows a Dirichlet distribution of parameters
\[
\quad \bigg(\sum_{\substack{\vec e\in\mathcal{V}:\\(x-\vec e)\cdot\vecu<0}}\alpha_{\vec e}\Bigg)_{x\in\HH_0}.\]
Note that, for $x\in\HH_0$, \[\mu(x)=\int \omega(\partial,x)\,{\rm d}\bP^{(\alpha)}(\omega).\] 

\begin{thm} \label{thm:identities}
Assume that $\drift\neq\vec0$ and $\vecu$ is a vector of $\R^d$ with rational slopes such that $\vecu\cdot\drift>0$. Then the following identity holds:
\begin{equation}\label{eq:identity}
P^{(\alpha)}_\mu(\forall n\geq 0,\,X_n\cdot\vecu\geq 0)= 1-\frac{E^{(\alpha)}_o\big[(X_1\cdot\vecu)_-\big]}{E^{(\alpha)}_o\big[(X_1\cdot\vecu)_+\big]}.
\end{equation}
For the walk inside the cylinder
\[C\defeq \Z^d\Big\slash(\Z\vecu_2+\cdots+\Z\vecu_d),\]
the previous identity also holds, as well as the following one involving distributions: 
\begin{equation}\label{eq:identity_cylinder}
\mathcal{L}_{\bP^{(\alpha)}}\left(P^{\omega}_{\omega(\partial,\cdot)}(\forall n\geq 0,\,X_n\cdot\vecu\geq 0)\right)=
{\rm Beta}\Big(\sum_{\substack{x\in\HH_0,\vece\in\V:\\(x\pm\vece)\cdot\vecu<0}}\mp\alpha_\vece,\ \sum_{\substack{x\in\HH_0,\vece\in\V:\\(x+\vece)\cdot\vecu<0}}\alpha_\vece\Big)
\end{equation}
where $\mathcal{L}_{\bP}(X)$ denotes the law under probability $\bP$ of a random variable $X$ and ${\rm Beta}(\cdot,\cdot)$ is the classical Beta distribution.  
\end{thm}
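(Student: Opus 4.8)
\textbf{Overview.} The plan is to reduce everything to an identity about a one-dimensional (in the $\vecu$-direction) projected walk killed on the half-space $\{x\cdot\vecu<0\}$, and then exploit the special algebraic structure of the Dirichlet distribution via its stability under the operation of summing out coordinates and restricting. The right-hand side of~\eqref{eq:identity} is $1-\drift\cdot\vecu/E^{(\alpha)}_o[(X_1\cdot\vecu)_+]\cdot$(correction), so the first task is to recognise the ratio $E^{(\alpha)}_o[(X_1\cdot\vecu)_-]/E^{(\alpha)}_o[(X_1\cdot\vecu)_+]$ as the ``escape probability'' of a suitable birth-type comparison, and then upgrade the inequality of Theorem~\ref{thm:main}'s proof (the lower bound on $P^{(\alpha)}_\mu(\forall n,\,X_n\cdot\vecu\ge0)$) into an equality.

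\textbf{Step 1: the cylinder reduction.} First I would work in the cylinder $C=\Z^d/(\Z\vecu_2+\cdots+\Z\vecu_d)$, where the hyperplane $\HH$ becomes the finite set $\HH_0$. In $C$, the event $\{\forall n\ge0,\ X_n\cdot\vecu\ge0\}$ for the walk started from $\omega(\partial,\cdot)$ is, conditionally on $\omega$, the survival probability of an absorbing Markov chain. The key structural input is a ``time-reversal / harmonicity'' computation: one shows that under $\bP^{(\alpha)}$, the quenched survival probability $q(\omega):=P^\omega_{\omega(\partial,\cdot)}(\forall n,\,X_n\cdot\vecu\ge0)$ can be written explicitly in terms of a single Dirichlet-distributed vector obtained by merging the boundary weights. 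This is where the augmented environment component $\omega(\partial,\cdot)$ pays off: the half-space entry data plus the one exit edge through the boundary together form a Dirichlet vector whose ``forward'' mass and ``backward'' mass are exactly the two Beta parameters in~\eqref{eq:identity_cylinder}.

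\textbf{Step 2: from the distribution to the mean.} Once~\eqref{eq:identity_cylinder} is established, \eqref{eq:identity} for the cylinder follows by taking expectations: the mean of a ${\rm Beta}(a,b)$ is $a/(a+b)$, and one checks by a direct count over $\HH_0$ and $\V$ that
\[
\frac{\sum_{x\in\HH_0,\vece:(x-\vece)\cdot\vecu<0}\alpha_\vece}{\sum_{x\in\HH_0,\vece:(x\pm\vece)\cdot\vecu<0}\alpha_\vece}
= 1-\frac{E^{(\alpha)}_o[(X_1\cdot\vecu)_-]}{E^{(\alpha)}_o[(X_1\cdot\vecu)_+]},
\]
using $\mu(x)\propto\sum_{(x-\vece)\cdot\vecu<0}\alpha_\vece$ and $E^{(\alpha)}_o[(X_1\cdot\vecu)_\pm]=\frac1\Sigma\sum_{\vece:\pm\vece\cdot\vecu>0}\alpha_\vece\,|\vece\cdot\vecu|$; the periodicity in the $\vecu_i$ directions makes the per-slab contribution match the single-step expectation up to the common factor $Z$.

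\textbf{Step 3: lifting from the cylinder to $\Z^d$.} For~\eqref{eq:identity} on $\Z^d$ itself (the non-cylinder case), I would argue that the walk on $\Z^d$ and the walk on $C$ have the \emph{same} law of $X_n\cdot\vecu$: projecting out the $\vecu_i$-coordinates is compatible with the Dirichlet i.i.d.\ structure (marginals of Dirichlet environments push forward to Dirichlet environments on the quotient graph with summed weights, and the transition probabilities in the $\vecu$-direction are unchanged). Hence $P^{(\alpha)}_\mu(\forall n,\,X_n\cdot\vecu\ge0)$ is literally the same quantity in both settings, so the cylinder identity transfers verbatim. This also clarifies why the distributional identity~\eqref{eq:identity_cylinder} is stated only for the cylinder: on $\Z^d$ the quenched survival probability is not a function of finitely many Dirichlet variables.

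\textbf{Main obstacle.} I expect the crux to be Step~1: proving that the quenched survival probability is \emph{exactly} $\mathrm{Beta}$-distributed, not merely that its mean matches. The natural route is the identity, valid for any Markov chain with absorption, expressing the survival probability from a given entry distribution as a ratio of Dirichlet normalising constants (a Green-function / Cramer-type computation), combined with the ``max-flow min-cut'' or cycle-reversal lemmas that are the signature tools in the Dirichlet-environment literature (and presumably already deployed in the proof of Theorem~\ref{thm:main} to get the lower bound). Showing that all the ``interior'' Dirichlet randomness integrates out and leaves only the boundary Beta is the delicate point; I would look for an argument that conditions on the trace of the walk on $\HH_0$, reducing to a finite absorbing chain on $\HH_0$ whose transition kernel is itself Dirichlet-distributed with the right parameters, and then invoke the known formula for the extinction probability of such a chain. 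Equivalently, one can try to recognise the survival probability as $\omega(\partial^*,\text{(forward side)})$ for a single auxiliary Dirichlet vector $\omega(\partial^*,\cdot)$ built from the boundary weights, which is Beta by definition of the Dirichlet law restricted to a two-block partition.
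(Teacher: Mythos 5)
Your plan goes wrong at two distinct places, and it also skips the step that actually converts the lower bound from Theorem~\ref{thm:main} into an equality.

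First, Step~3 is incorrect. Projecting $\Z^d\to C=\Z^d/(\Z\vecu_2+\cdots+\Z\vecu_d)$ does \emph{not} carry the $\Z^d$-walk to the cylinder walk. On $\Z^d$ the environment at $x$ and at $x+\vecu_2$ are independent Dirichlet variables; on $C$ they are the \emph{same} variable. Consequently the projected $\Z^d$ process is not a Markov chain on $C$ (the transition law from a vertex of $C$ depends on which lift the walk currently occupies), and under the annealed law the reinforcement accumulates differently (on $C$ a return to $\pi(x)$ ``re-uses'' the urn; on $\Z^d$ a visit to $x+\vecu_2$ starts a fresh urn). The law of $X_n\cdot\vecu$ is genuinely different in the two settings, and one cannot transfer the identity from one to the other. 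The paper proves the two versions of~\eqref{eq:identity} in parallel, each by its own application of the cycle-reversal lemma — the annealed Lemma~\ref{lem:cycle} on $G_{N,L}$ with $N\to\infty$ then $L\to\infty$ for $\Z^d$, and the quenched Lemma~\ref{lem:cycle2} on $G_L$ (with $N=1$) for $C$.

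Second, Step~1 as you describe it does not work: the trace of the walk on $\HH_0$ is not a Markov chain with Dirichlet-distributed kernel. The excursion probabilities between points of $\HH_0$ are highly non-linear functionals of the entire (infinite) environment, and their joint law is not Dirichlet — the ``local'' Dirichlet structure does not survive this coarse-graining. Nor is the quenched survival probability a single coordinate of an auxiliary Dirichlet vector. The actual mechanism in the paper is time reversal: on the finite graph $G_L$, $1-P^\omega_{\omega(\partial,\cdot)}(H_R<H_\partial)=P^\omega_\partial(H^+_\partial<H_R)$ is a sum over cycles invariant under orientation reversal, so by Lemma~\ref{lem:cycle2} it equals $P^{\omegach}_\partial(H^+_\partial<H_R)$; in the reversed graph this factors as $(1-\omegach_{(\partial,R)})\,P^{\omegach}_{\omegach(\partial,\cdot)|_{\mathcal L}}(H_\partial<H_R)$, and it is the \emph{restriction property} of the Dirichlet law (applied at the single vertex $\partial$) that produces the Beta factor $1-\omegach_{(\partial,R)}\sim\mathrm{Beta}(\alpha_{(\mathcal L,\partial)},\alpha_{(R,\partial)})$, independent of the rest. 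This is a much more specific argument than the ``cycle-reversal / max-flow'' toolbox you gesture at.

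Finally, even granting the Beta factorisation, you still need the residual term $P^{\omegach}_{\omegach(\partial,\cdot)|_{\mathcal L}}(H_\partial<H_R)$ (equivalently, $1-P_{\much}^{(\alphach)}(\forall n,\,X_n\cdot\vecu\ge0)$ after $L\to\infty$) to converge to $1$, i.e.\ you need $P^{(\alpha)}_\much(\forall n,\,X_n\cdot\vecu\le0)=0$. This is where the paper invokes central symmetry together with Theorem~\ref{thm:main} \emph{and} a directional $0$-$1$ law (Zerner--Merkl in $d=2$, Bouchet in $d\ge3$). Without this ingredient you only recover the inequality of Theorem~\ref{thm:main}, not the identity. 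Your proposal omits this step entirely.
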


\paragraph*{Remarks.} 
\begin{itemize}
	\item The distribution of $X_1$ under $P^{(\alpha)}_o$ is simply given by the initial weights, hence \eqref{eq:identity} is fully explicit. This also follows from taking the expectation of the law in~\eqref{eq:identity_cylinder} (the expectation of ${\rm Beta}(a,b)$ is $\frac a{a+b}$). 
	\item The case $\vecu=\vec e_1$ with nearest-neighbour jumps admits a simple expression. Indeed, $\HH_0=\{0\}$ hence the results read as follows: if $\alpha_1>\alpha_{-1}$, 
\begin{equation}\label{eqn:conjecture}
P^{(\alpha)}_o(\forall n\geq 0,\,X_n\cdot\vece_1\geq 0)= 1-\frac{\alpha_{-1}}{\alpha_1}
\end{equation}
(as conjectured in~\cite{SabotTournier10}), and on the cylinder $\Z\times\mathbb{T}$ with $\mathbb T=\Z^{d-1}/(\Z\vec v_2+\cdots+\Z\vec v_d)$ for some basis $(\vec v_2,\ldots,\vec v_d)$ of $\R^{d-1}$ with integer coordinates, 
\[
\mathcal{L}_{\bP^{(\alpha)}}\left(P^{\omega}_{\omega(\partial,\cdot)}(\forall n\geq 0,\,X_n\cdot\vece_1\geq 0)\right)= {\rm Beta}(\alpha_1-\alpha_{-1},\alpha_1)
\]
where, under $\bP^{(\alpha)}$, $\omega(\partial,\cdot)$ follows a Dirichlet distribution on $\{0\}\times\mathbb T$ with all parameters equal to $\alpha_1$. 
	\item In dimension 1, the identities already follow from~\cite{SabotTournier10} in a simple way. Note that they are not trivial even in this case: the quenched identity actually dates back to~\cite{Chamayou-Letac} where it was proved in a completely different way. 
	\item Mild variations of the proof also provide other identities, as for instance
\[E^{(\alpha)}_\mu\left[\widetilde T_0^\vecu\middle|\widetilde T_0^\vecu<\infty\right]=E^{(\alpha)}_\mu[T^\vecu_0]+1-\frac{E^{(\alpha)}_o\big[(X_1\cdot\vecu)_+\big]}{E^{(\alpha)}_o\big[(X_1\cdot\vecu)_-\big]}\]
or, for all $L\in\N$ such that $L\|\vecu\|>\|\vece\|$ for all $\vece\in\V$,
\begin{equation}\label{eq:identity_slabs}
\frac{P^{(\alpha)}_\mu(\widetilde{T}^\vecu_0<T^\vecu_L)}{P^{(\alpha)}_\mu(T^\vecu_0<\widetilde{T}^\vecu_{-L})}= \frac{E^{(\alpha)}_o\big[(X_1\cdot\vecu)_-\big]}{E^{(\alpha)}_o\big[(X_1\cdot\vecu)_+\big]}
\end{equation}
where, for $L\in\Z$, we defined the projected hitting times
\[T^\vecu_L\defeq\inf\{n\,:\,X_n\cdot\vecu>L\|\vecu\|^2\}\quad\text{and}\quad
\widetilde{T}^\vecu_L\defeq\inf\{n\,:\,X_n\cdot\vecu<L\|\vecu\|^2\}.\]
\end{itemize}

\section{Proof of Theorem~\ref{thm:main}: Directional transience}

The proof, like~\cite{SabotTournier10}, uses a time reversal property from~\cite{Sabot09} (re-proved in a more probabilistic way in \cite{SabotTournier10}). To keep the present proof more self-contained, and for the sake of introducing some notation, we recall the following very elementary (yet powerful) lemma that sums up the only aspect of this property that we will use later. This is Lemma~1 of~\cite{SabotTournier10}.

\begin{lemme}\label{lem:cycle}
Let $G=(V,E)$ be a directed graph, endowed with positive weights $(\alpha_e)_{e\in E}$. We denote by $\Gch=(V,\Ech)$ its reversed graph, i.e.~$\Ech\defeq\{\ech\defeq(\overline{e},\underline{e})\,:\,e=(\underline{e},\overline{e})\in E\}$, endowed with the weights $\alphach_\ech\defeq\alpha_e$. Assume that $\mathrm{div}(\alpha)=0$, i.e.,~for every $x\in V$,
\[\alpha_x\defeq \sum_{e\,:\,\underline{e}=x}\alpha_e = \sum_{e\,:\,\overline{e}=x}\alpha_e\defqe \alphach_x.\] 
Then, for any closed path $\sigma=(x_0,x_1,\ldots,x_{n-1},x_0)$ in $G$, letting $\sigmach\defeq(x_0,x_{n-1},\ldots,x_1,x_0)$ denote its reverse (in $\Gch$), we have
\[P_{x_0}^{(\alpha)}\big( (X_0,\ldots,X_n)=\sigma \big)=P_{x_0}^{(\alphach)}\big( (X_0,\ldots,X_n)=\sigmach \big),\]
where the laws of oriented-edge reinforced random walks on $G$ or $\Gch$ are defined as in~\eqref{eq:def_oerrw}.  
\end{lemme}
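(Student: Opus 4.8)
The plan is to prove Lemma~\ref{lem:cycle} by reducing the statement about reinforced random walks to a statement about products of transition weights along paths, using the Polya-urn structure encoded in~\eqref{eq:def_oerrw}. First I would observe that for a fixed path $\sigma=(x_0,x_1,\ldots,x_n=x_0)$, the probability $P_{x_0}^{(\alpha)}((X_0,\ldots,X_n)=\sigma)$ is a product of factors of the form $\frac{\alpha_{e_k}+N_k(e_k)}{\sum_{f:\underline f=x_k}(\alpha_f+N_k(f))}$, where $e_k=(x_k,x_{k+1})$. Because the reinforcement is linear and edge-oriented, the numerators, grouped by edge, telescope into products $\prod_{e}\prod_{j=0}^{m_e-1}(\alpha_e+j)$ where $m_e$ is the number of times $\sigma$ traverses $e$; equivalently $\prod_e \frac{\Gamma(\alpha_e+m_e)}{\Gamma(\alpha_e)}$. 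The denominators, grouped by vertex, likewise telescope into $\prod_x \prod_{j=0}^{n_x-1}(\alpha_x+j)=\prod_x\frac{\Gamma(\alpha_x+n_x)}{\Gamma(\alpha_x)}$, where $n_x=\sum_{e:\underline e=x}m_e$ is the number of departures of $\sigma$ from $x$ (here $\alpha_x=\sum_{e:\underline e=x}\alpha_e$).

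Next I would perform the same computation for the reversed path $\sigmach=(x_0,x_{n-1},\ldots,x_1,x_0)$ on the reversed graph $\Gch$ with weights $\alphach_\ech=\alpha_e$. The key combinatorial point is that $\sigmach$ traverses the reversed edge $\ech=(\overline e,\underline e)$ exactly as many times as $\sigma$ traverses $e$, so the edge-multiplicities are the same and the numerator products agree term by term: $\prod_\ech\frac{\Gamma(\alphach_\ech+m_e)}{\Gamma(\alphach_\ech)}=\prod_e\frac{\Gamma(\alpha_e+m_e)}{\Gamma(\alpha_e)}$. For the denominators, the number of departures of $\sigmach$ from a vertex $x$ equals the number of arrivals of $\sigma$ at $x$, i.e.~$\check n_x=\sum_{e:\overline e=x}m_e$. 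Now the divergence-free hypothesis $\mathrm{div}(\alpha)=0$ gives $\alphach_x=\alpha_x$, and since $\sigma$ is a closed path, the number of arrivals at $x$ equals the number of departures from $x$, so $\check n_x=n_x$ for every $x$. Hence the denominator products coincide as well, and the two path probabilities are equal.

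The mildly delicate points I would be careful about are: (i) handling the telescoping cleanly — the cleanest route is to note that $N_k(e)$ at the moment $\sigma$ makes its $(j+1)$-st traversal of $e$ equals $j$ regardless of when earlier traversals occurred, because the walk follows the fixed deterministic path $\sigma$, so the numerator contributed at that step is exactly $\alpha_e+j$; similarly the denominator at a step departing from $x$ for the $(i+1)$-st time is $\alpha_x+i$ because $\sum_f N_k(f)$ over $f$ with $\underline f=x$ counts precisely the previous departures from $x$; (ii) the closed-path condition is used only to equate, for each vertex, the in-degree and out-degree of $\sigma$ as a multiset-of-edges walk (including the closing step $x_{n-1}\to x_0$ and the implicit ``start'' at $x_0$), which makes arrivals and departures balance at every vertex, including $x_0$. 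This bookkeeping at $x_0$ — making sure the initial position and the final return are counted consistently on both sides — is the step most likely to need a careful sentence, but it is not a real obstacle; once in-degree equals out-degree at every vertex, the divergence-free assumption does the rest. No asymptotics or probabilistic estimates are needed; the whole proof is an identity between finite products.
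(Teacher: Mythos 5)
Your proof is correct and follows essentially the same route as the paper's: write each path probability as a ratio of rising factorials $\prod_e \alpha_e(\alpha_e+1)\cdots(\alpha_e+m_e-1)$ over $\prod_x \alpha_x(\alpha_x+1)\cdots(\alpha_x+n_x-1)$ via the telescoping of the linear reinforcement, then match the numerators edgewise (using $\alphach_\ech=\alpha_e$ and equal traversal counts) and the denominators vertexwise (using $\alphach_x=\alpha_x$ and, since the path is closed, equal visit/departure counts). The paper phrases the vertex-count equality as a one-word appeal to ``cyclicity'' (the multiset $\{x_0,\ldots,x_{n-1}\}$ is unchanged by reversal), whereas you derive it by equating departures of $\sigmach$ with arrivals of $\sigma$ and then arrivals with departures for a closed path; these are the same observation, and your version usefully isolates exactly where the closed-path hypothesis enters.
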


\begin{proof}
From the definition of $P_{x_0}^{(\alpha)}$ we get
\[P_{x_0}^{(\alpha)}\left( (X_0,\ldots,X_n)=\sigma \right)=\frac{\prod_{e\in E} \alpha_e(\alpha_e+1)\cdots (\alpha_e+n_e(\sigma)-1)}{\prod_{x\in V} \alpha_x(\alpha_x+1)\cdots (\alpha_x+n_x(\sigma)-1)},\]
where $n_e(\sigma)$ (resp.~$n_x(\sigma)$) is the number of crossings of the oriented edge $e$ (resp. the number of visits of the vertex $x$) in the path $\sigma$. Cyclicity gives $n_e(\sigma)=n_\ech(\sigmach)$ and $n_x(\sigma)=n_x(\sigmach)$ for all $e\in E,x\in V$. Furthermore we have by assumption $\alphach_x=\alpha_x$ for every vertex $x$, and by definition $\alpha_e=\alphach_\ech$ for every edge $e$. This shows that the previous product matches the similar product with $\Ech$, $\alphach$ and $\sigmach$ instead of $E$, $\alpha$ and $\sigma$, hence the lemma. 
\end{proof}

Let us turn to the proof of Theorem~\ref{thm:main}. 
Assume $\drift\neq\vec0$, and let $\vecu$ be a vector with rational slopes such that $\drift\cdot\vecu>0$.



We make use of the notations introduced before Theorem~\ref{thm:identities}. As in the introduction, up to multiplication by a constant, we may assume that $\vecu\in\Z^d$, and also that $\|\vecu\|\geq\|\vec e\|$, $\forall \vec e\in\mathcal V$ (we may have $\|\vec e\|>1$, cf.~the second remark after the corollary). Remember that $(\vecu,\vecu_2,\ldots,\vecu_d)$ is a basis such that $\vecu_i\in\Z^d$ and $\vecu_i\perp\vecu$ for all $i$. 

Let us consider the event $D\defeq\{\forall n\geq 0,\,X_n\cdot\vecu\geq0\}$, and define a finite graph that will enable us to bound $P^{(\alpha)}_\mu(D)\defeq\sum_{x\in\HH_0}\mu(x)P^{(\alpha)}_x(D)$ from below. 

Let $N,L\in\mathbb{N}^*$. We first consider the cylinder
\begin{align*}
C_{N,L}
    & \defeq \Big\{x\in\Z^d\,:\,0\leq x\cdot\vecu\leq L\|\vecu\|^2\Big\}\Big\slash(N\Z\vecu_2+\cdots+N\Z\vecu_d),
\end{align*}
i.e.~the slab $\{0\leq x\cdot\vecu\leq L\|\vecu\|^2\}\cap\Z^d$ where vertices that differ by $N\vecu_i$ for some $i\in\{2,\ldots,d\}$ are identified. 
Let $\mathcal{R}$ denote its ``right'' end, i.e.
\[\mathcal{R}\defeq\Big\{x\in\Z^d\,:\,\exists\vec e\in\mathcal V,\, x\cdot\vecu\leq L\|\vecu\|^2<(x+\vec e)\cdot\vecu\Big\}\Big\slash(N\Z\vecu_2+\cdots+N\Z\vecu_d)\subset C_{N,L}\]
(note that the inclusion holds for small $L$ due to the constraint $\|\vecu\|\geq\|\vec e\|$) and similarly $\mathcal{L}\subset C_{N,L}$ for the ``left'' end. We may now define the finite graph $G_{N,L}$ (refer to Figure~\ref{fig:grid2d} for an example in $\Z^2$). Its vertex set is
\[V_{N,L}\defeq C_{N,L}\cup\{R,\partial\},\]
where $R$ and $\partial$ are new vertices, and the edges of $G_{N,L}$ are of the following types:
\begin{enumerate}[a)]\setlength{\itemsep}{0pt}
	\item edges induced by those of $\Z^d$ inside $C_{N,L}$;
    \item edges from (resp.~to) the vertices of $\mathcal{L}$ to (resp.~from) $\partial$, corresponding to the edges of $\Z^d$ exiting (resp.~entering) the cylinder ``through the left end'';
	\item edges from (resp.~to) the vertices of $\mathcal{R}$ to (resp.~from) $R$, corresponding to the edges of $\Z^d$ exiting (resp.~entering) the cylinder ``through the right end'';
    \item a new edge from $R$ to $\partial$.
\end{enumerate}
Note that in b) and c) several edges may connect two vertices, and that in d) no edge goes from $\partial$ to $R$. 
We also introduce weights $\alpha^{N,L}_e$ on the edges of $G_{N,L}$ as follows (invoking the translation invariance of the weights in $\Z^d$):
\begin{itemize}
    \item edges defined in a), b) and c) have the weight of the corresponding edge in $\Z^d$;
    \item the edge from $R$ to $\partial$ has weight
\[\alpha^{N,L}_{(R,\partial)}\defeq\Bigg(\sum_{\substack{x\in\mathcal{R},\,\vec e\in\mathcal{V}:\\x+\vec e\notin C_{N,L}}}\alpha_{\vec e}\Bigg) -\Bigg( \sum_{\substack{x\in\mathcal{L},\,{\vec e}\in\mathcal{V}:\\x+\vec e\notin C_{N,L}}}\alpha_{\vec e}\Bigg).\]
\end{itemize}

By construction
, we have $\mathrm{div}\,\alpha^{N,L}=0$. The main point to check however is that $\alpha^{N,L}_{(R,\partial)}$ is positive. 

Due to periodicity, $\mathcal L$ (and $\mathcal R$) decomposes into $N^{d-1}$ subsets which are translations of $\HH_0$ and we have
\[\alpha^{N,L}_{(R,\partial)}
=N^{d-1}\area(\vecu_2,\cdots,\vecu_d)\sum_{\vec e\in\mathcal V}\big(\Phi_\vecu(\vec e)-\Phi_{-\vecu}(\vec e)\big)\alpha_{\vec e}\] 
where $\area(\vecu_2,\ldots,\vecu_d)=\frac{|\det(\vecu,\vecu_2,\ldots,\vecu_d)|}{\|\vecu\|}$ is the $(d-1)$-volume of the hypersurface $[0,\vecu_2]+\cdots+[0,\vecu_d]$ and $\Phi_\vecu(\vec e)$ is the flux of $\vec e$ through the oriented hyperplane $\vecu^\perp$:
\begin{align*}
\Phi_{\vecu}(\vec e)
	& \defeq \frac1{N^{d-1}\area(\vecu_2,\cdots,\vecu_d)}\#\big\{x\in\mathcal{R}\,:\, x+\vec e\notin C_{N,L}\big\}\\
	& \ = \frac1{\area(\vecu_2,\cdots,\vecu_d)}\#\bigg(\big\{x\in\Z^d\,:\, x\cdot\vecu\leq 0 < (x+\vec e)\cdot\vecu\big\}\bigg/\big(\Z\vecu_2+\cdots+\Z\vecu_d\big) \bigg). 
\end{align*}
Clearly $\Phi_\vecu(\vec e)$ is zero if $\vecu\cdot\vec e\leq 0$ and otherwise it is a simple geometric fact that the last cardinality above equals the volume of the parallelotope on the vectors $\vec e, \vecu_2,\ldots, \vecu_d$. Indeed, this cardinality is also the number of lattice points in the torus $\R^d\big/(\Z\vec e+\Z\vecu_2+\cdots+\Z\vecu_d)$, and this torus can be partitioned into the unit cubes $x+[0,1)^d$ indexed by the lattice points $x$ in it. Hence in any case 
\[\Phi_\vecu(\vec e)=\frac{{\rm Vol}(\vece,\vecu_2,\ldots,\vecu_d)}{\area(\vecu_2,\ldots,\vecu_d)}\indic_{(\vecu\cdot\vece>0)}=\Big(\frac\vecu{\|\vecu\|}\cdot\vec e\Big)_+.\]
This gives
\begin{align*}
\alpha^{N,L}_{(R,\partial)}
	& =N^{d-1}\area(\vecu_2,\ldots,\vecu_d)\sum_{\vec e\in\mathcal V}\big(\bigg(\frac\vecu{\|\vecu\|}\cdot\vec e\bigg)_+-\bigg(-\frac\vecu{\|\vecu\|}\cdot\vec e\bigg)_+\big)\alpha_{\vec e}\\
	& =N^{d-1}\area(\vecu_2,\ldots,\vecu_d)\sum_{\vec e\in\mathcal V}\bigg(\frac\vecu{\|\vecu\|}\cdot\vec e\bigg)\alpha_{\vec e}\\
	& = N^{d-1}\area(\vecu_2,\ldots,\vecu_d)\frac\vecu{\|\vecu\|}\cdot\Sigma \drift
\end{align*}
therefore finally $\alpha^{N,L}_{(R,\partial)}>0$ since $\vecu\cdot\drift>0$, as expected. 



NB. The above computation also shows that, introducing a new notation, 
\[\alpha^{N,L}_{(\mathcal{L},\partial)}\defeq\sum_{x\in\mathcal{L}}\alpha^{N,L}_{(x,\partial)}=\sum_{\substack{x\in\mathcal{L},\,\vec e\in\mathcal{V}:\\x-\vec e\notin C_{N,L}}}\alpha_{\vec e}=N^{d-1}\area(\vecu_2,\ldots,\vecu_d)\sum_{\vec e\in\mathcal V}\bigg(-\frac\vecu{\|\vecu\|}\cdot\vec e\bigg)_+\alpha_{\vec e},\]
hence in particular
\begin{equation}\label{eqn:ratio}
\frac{\alpha^{N,L}_{(\mathcal{L},\partial)}}{\alpha^{N,L}_{({R},\partial)}}=\frac{\sum_{\vec e\in\mathcal V}\big(-\vecu\cdot\vec e\big)_+\alpha_{\vec e}}{\sum_{\vec e\in\mathcal V}\big(\vecu\cdot\vec e\big)\alpha_{\vec e}}=\frac{E^{(\alpha)}_o\big[(X_1\cdot\vecu)_-\big]}{E^{(\alpha)}_o\big[X_1\cdot\vecu\big]}.
\end{equation}

\begin{figure}
\begin{center}
\resizebox{12cm}{!}{\includegraphics{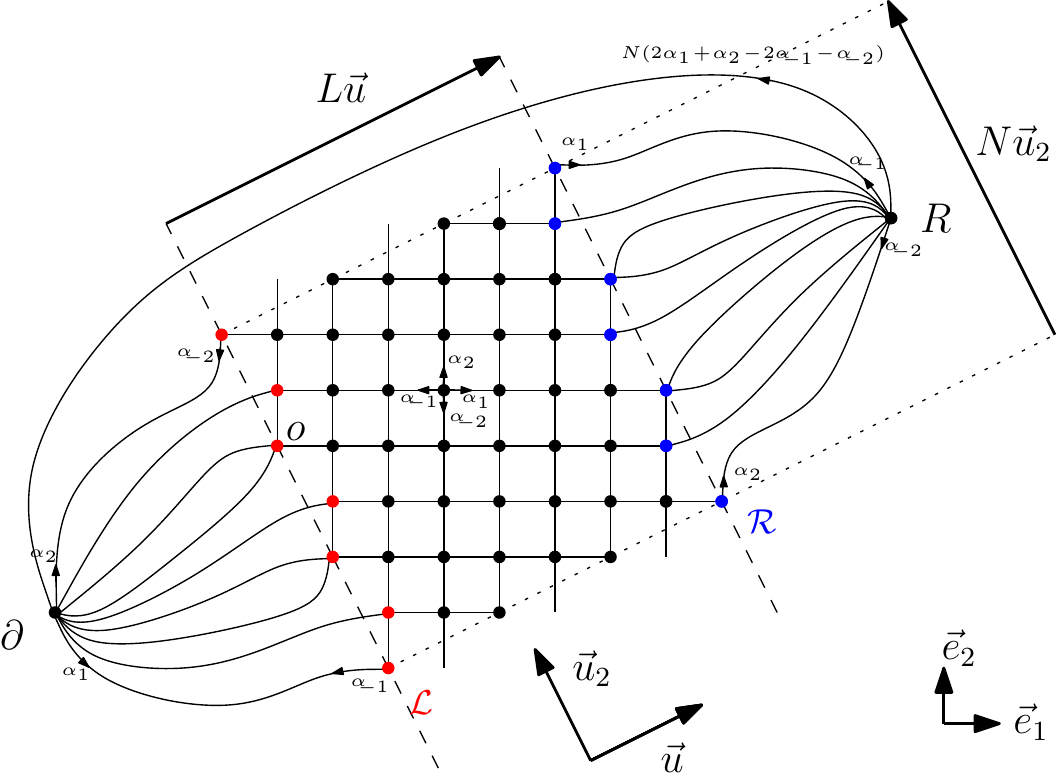}}
\end{center}
\caption{Graph $G_{N,L}$ for $\vecu=2\vec e_1+\vec e_2$ (boundary conditions in direction $\vecu_2$ are periodic)}
\label{fig:grid2d}
\end{figure}


If the walk starts from $X_0=\partial$, then we have $X_1= Z\ ({\rm mod}\ \vecu_2,\ldots,\vecu_d)$ where $Z$ has law $\mu$ (defined in~\eqref{def:mu}). Using translation invariance with respect to vectors $\vecu_2,\ldots,\vecu_d$, and the fact that, starting at $\partial$, the event $\{H_R<H^+_\partial\}$ (where $H$ stands for hitting time and $H^+$ for positive hitting time) only depends on the walk before its first return in $\partial$ --- and thus not on the reinforcement of the very first edge --- we deduce, considering $\mu$ as a law on (a subset of) $\mathcal{L}$,
\begin{align}
P_{\mu}^{(\alpha^{N,L})}(H_R<H_\partial)
	& =P_\partial^{(\alpha^{N,L})}(H_R\circ\theta_1<H_\partial\circ\theta_1)=P_\partial^{(\alpha^{N,L})}(H_R<H^+_\partial)  \label{eq:change_x0}
\end{align}
(using $\theta$ to denote time shift) and thus 
\[
P_{\mu}^{(\alpha^{N,L})}(H_R<H_\partial)
 \geq P_\partial^{(\alpha^{N,L})}(X_{H_\partial-1}=R). 
\]
The last event is the probability that the walk follows a cycle in a given family (namely cycles going through $\partial$ only once and containing the edge $(R,\partial)$). Applying Lemma~\ref{lem:cycle} to every such cycle and summing up, we get (using~\eqref{eqn:ratio} for the last equality)
\begin{align*}
P_\partial^{(\alpha^{N,L})}(X_{H_\partial-1}=R)
	& =P_\partial^{(\alphach^{N,L})}(X_1=R)\\
	& =\frac{\alpha^{N,L}_{(R,\partial)}}{\alpha^{N,L}_{(R,\partial)}+\alpha^{N,L}_{(\mathcal{L},\partial)}}\\
	& = \frac{E^{(\alpha)}_o\big[X_1\cdot\vecu\big]}{E^{(\alpha)}_o\big[(X_1\cdot\vecu)_+\big]}.
\end{align*}
This lower bound is positive and uniform with respect to $L$ and $N$.  We may rewrite the result as
\[P_{\mu}^{(\alpha^{N,L})}(H_R<H_\partial)\geq 1-\frac{E^{(\alpha)}_o\big[(X_1\cdot\vecu)_-\big]}{E^{(\alpha)}_o\big[(X_1\cdot\vecu)_+\big]}.\]

Letting $N$ and then $L$ go to infinity as in \cite{SabotTournier10} (applied to each of the finitely many possible values of $X_0$ in $\HH_0$), we get
\[P^{(\alpha)}_\mu(\forall n\geq 0,\,X_n\cdot\vecu\geq 0)\geq 1-\frac{E^{(\alpha)}_o\big[(X_1\cdot\vecu)_-\big]}{E^{(\alpha)}_o\big[(X_1\cdot\vecu)_+\big]}\]
hence, by translation invariance of $P^{(\alpha)}_o$ and Kalikow's 0-1 law (and Lemma 4 of~\cite{Zerner-Merkl}, showing that the walk cannot stay in a slab), 
\begin{align*}
P^{(\alpha)}_o(X_n\cdot\vecu\limites{}{n}+\infty)
	 =P^{(\alpha)}_\mu(X_n\cdot\vecu\limites{}{n}+\infty)
	&\ge P^{(\alpha)}_\mu(\forall n\geq 0,\,X_n\cdot\vecu\geq 0)\\
	& \geq  1-\frac{E^{(\alpha)}_o\big[(X_1\cdot\vecu)_-\big]}{E^{(\alpha)}_o\big[(X_1\cdot\vecu)_+\big]}>0.
\end{align*}
This is the content of Theorem~\ref{thm:main}.


\section{Proof of the Corollary: Asymptotic direction}

Recall from the introduction that oriented-edge reinforced random walks are also random walks in Dirichlet environment. Due to the 0-1 law of Zerner and Merkl~\cite{Zerner-Merkl} (cf.~also~\cite{Zerner}) in dimension 2 (for random walks in elliptic random environment), and of Bouchet~\cite{Bouchet} in dimension at least 3 (for random walks in Dirichlet environment), the result of Theorem~\ref{thm:main} turns into: for any $\vec u\in\R^d$ with rational slopes and such that $\vec u\cdot\drift>0$, 
\begin{equation}\label{eqn:transience}
X_n\cdot\vecu\limites{}{n}+\infty,\qquad P_o^{(\alpha)}-a.s.
\end{equation}
Note that the set of directions $\vecu\in\R^d$ such that~\eqref{eqn:transience} holds also has to be convex, therefore it contains the half-space $\{\vecu\in\R^d\,:\,\vecu\cdot\drift>0\}$. 

By Theorem~1 of \cite{Simenhaus}, there exists a direction $\vec \nu\in\mathbb S^{d-1}$ such that
\[\frac{X_n}{\|X_n\|}\limites{}{n}\vec \nu,\qquad P_o^{(\alpha)}-a.s.\]
On the other hand, this direction satisfies $\vec \nu\cdot\vecu\geq0$ for every $\vecu$ that satisfies~\eqref{eqn:transience}, hence in particular for every $\vecu$ such that $\drift\cdot\vecu>0$. This fully characterizes $\vec\nu$, which therefore has to be
\[\vec\nu=\frac{\drift}{\|\drift\|}.\]

\paragraph{Remarks}
\begin{itemize}
	\item Before learning about the article~\cite{Bouchet}, a former (private) version of the present paper gave a weaker result in dimension at least 3, namely that an asymptotic direction exists, although it remained unidentified, and possibly random (two-valued). Indeed, by the 0-1 law of Kalikow (in its elliptic version proved in~\cite{Zerner-Merkl}) and Theorem 1.8 of~\cite{DrewitzRamirez}, there exists $\vec \nu\in\mathbb{S}^{d-1}$ and an event $A$ such that, almost-surely,
\[\frac{X_n}{\|X_n\|}\limites{}{n} (\indic_A-\indic_{A^c})\vec \nu\]
but identifying $\vec\nu$ from Theorem~\ref{thm:main} is hindered by the restriction to rational slopes due to the possible non-convexity of the set of directions $\vecu$ of transience (i.e.\ satisfying the theorem).
	\item In dimension at least 3, since~\cite{Bouchet} already proves the existence of an asymptotic direction, an alternative derivation of the corollary without~\cite{Simenhaus} would consist in using Theorem~\ref{thm:main} in the proof of Theorem~2 of~\cite{Bouchet} instead of referring to~\cite{SabotTournier10}.
\end{itemize}

\newpage
\section{Proof of Theorem~\ref{thm:identities}: Identities}


\subsection{Annealed identity}

Let $N,L\in\N^*$. Let us make use of the definitions involved in the proof of Theorem~\ref{thm:main}, in particular the graph $G_{N,L}$, and apply Lemma~\ref{lem:cycle} to a different family of cycles. 

As in~\eqref{eq:change_x0}, we have
\begin{align*}
P_{\mu}^{(\alpha^{N,L})}(H_R<H_\partial) =P_\partial^{(\alpha^{N,L})}(H_R<H^+_\partial) = 1-P_\partial^{(\alpha^{N,L})}(H^+_\partial<H_R).
\end{align*}
The last event is the probability that the walk follows a cycle in a given family (namely cycles that pass through $\partial$ exactly once and don't visit $R$). 
Note that this set of cycles is globally invariant by change of orientation. Thus, applying Lemma~\ref{lem:cycle} to every such cycle and summing up, we get
\[P_\partial^{(\alpha^{N,L})}(H^+_\partial<H_R)=P_\partial^{(\alphach^{N,L})}(H^+_\partial<H_R).\]
The edge $(\partial,R)$ is in $\Gch$, hence we may decompose the event on the right as follows: the first step is different from $R$, and then the walk comes back to $\partial$ before reaching $R$. Since the edge $(\partial,X_1)$ is not involved in the second part, these events are independent and we have
\begin{align*}
P_\partial^{(\alphach^{N,L})}(H^+_\partial<H_R)
	& = P_\partial^{(\alphach^{N,L})}(X_1\neq R)P_\much^{(\alphach^{N,L})}(H_\partial<H_R),
\end{align*}
where $\much$ is defined like $\mu$ with respect to $\alphach$ instead of $\alpha$. First, using~\eqref{eqn:ratio} for the last equality,
\[P_\partial^{(\alphach^{N,L})}(X_1\neq R) = 1-\frac{\alpha_{(R,\partial)}}{\alpha_{(R,\partial)}+\alpha_{(\mathcal L,\partial)}}=\frac{E^{(\alpha)}_o\big[(X_1\cdot\vecu)_-\big]}{E^{(\alpha)}_o\big[(X_1\cdot\vecu)_+\big]}.\]
Gathering everything, we obtain
\[P_{\mu}^{(\alpha^{N,L})}(H_R<H_\partial) = 1 -\frac{E^{(\alpha)}_o\big[(X_1\cdot\vecu)_-\big]}{E^{(\alpha)}_o\big[(X_1\cdot\vecu)_+\big]}\Big(1-P_\much^{(\alphach^{N,L})}(H_R<H_\partial)\Big).\] 
Arguing like for Theorem~\ref{thm:main} (i.e.\ cf.~\cite{SabotTournier10}), we let $N$, then $L$ go to infinity and get
\[P_{\mu}^{(\alpha)}(\forall n\geq 0,\,X_n\cdot\vecu\geq 0) = 1 -\frac{E^{(\alpha)}_o\big[(X_1\cdot\vecu)_-\big]}{E^{(\alpha)}_o\big[(X_1\cdot\vecu)_+\big]}\Big(1-P_\much^{(\alphach)}(\forall n\geq 0,\,X_n\cdot\vecu\geq 0)\Big).\] 
However, by central symmetry, 
\[P_\much^{(\alphach)}(\forall n\geq 0,\,X_n\cdot\vecu\geq 0)=P^{(\alpha)}_\much(\forall n\geq 0,\,X_n\cdot\vecu\leq 0)\]
and the latter event has probability 0 because of Theorem~\ref{thm:main} combined to a 0-1 law (like in corollary, \cite{Zerner-Merkl} in dimension 2, or \cite{Bouchet} in dimension $\ge3$). This concludes.

\subsection{Quenched identity on a cylinder}

Let us first recall the quenched version of Lemma~\ref{lem:cycle}, for which we refer to~\cite{Sabot09} or~\cite{SabotTournier10}. 

\begin{lemme}\label{lem:cycle2}
Let $G=(V,E)$ be a finite directed graph, endowed with positive weights $(\alpha_e)_{e\in E}$. Recall notations from Lemma~\ref{lem:cycle}. To any environment $\omega$ on $G$, we also associate its reverse $\omegach$ defined by $\omegach_\ech=\frac{\pi(\underline e)}{\pi(\overline e)}\omega_e$ for all $e\in E$, where $\pi$ is the invariant measure for $\omega$.  
Assume that $\mathrm{div}(\alpha)=0$. Then $\mathcal L_{\bP^{(\alpha)}}(\omegach)=\bP^{(\alphach)}$. 
\end{lemme}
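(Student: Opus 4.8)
\textbf{Proof plan for Lemma~\ref{lem:cycle2} (quenched time reversal).}

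The plan is to derive this statement by an explicit change-of-variables computation on the Dirichlet density, combined with the classical description of the reversed Markov chain. First I would recall that, for a fixed environment $\omega$ with invariant measure $\pi=\pi_\omega$ (unique up to normalization when the underlying graph is strongly connected, which one may assume here, the general case following by treating irreducible components separately), the time-reversed chain has transition probabilities $\omegach_\ech = \frac{\pi(\underline e)}{\pi(\overline e)}\,\omega_e$; one checks immediately that these are nonnegative, sum to $1$ at each vertex (this is exactly the statement that $\pi$ is $\omega$-invariant), and that the operation $\omega\mapsto\omegach$ is an involution on the space of environments on $G$ (since $\pi_{\omegach}$ is proportional to $\pi_\omega$). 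So $\omega\mapsto\omegach$ is a bijection of $\Omega=\probab^V$ onto itself, and the content of the lemma is that it pushes $\bP^{(\alpha)}=\bigotimes_x \dir^{(\alpha_x)}$ forward to $\bP^{(\alphach)}=\bigotimes_x \dir^{(\alphach_x)}$, where $\alpha_x=(\alpha_e)_{\underline e=x}$ and $\alphach_x=(\alphach_\ech)_{\underline{\ech}=x}=(\alpha_e)_{\overline e = x}$; note these two families of vertex-parameter-vectors agree componentwise because $\mathrm{div}(\alpha)=0$ only guarantees equality of the \emph{totals} $\alpha_x=\alphach_x$, while the individual parameters get permuted across vertices, which is precisely what makes the statement non-trivial.

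The key computation is the Jacobian of $\omega\mapsto\omegach$. I would parametrize the environment by the free coordinates $(\omega_e)_{e\in E}$ subject to $\sum_{\underline e=x}\omega_e=1$ for each $x$, and likewise express $\pi$ as a rational function of $\omega$ via the Markov chain tree theorem: $\pi(x)$ is (up to a global constant) the sum over spanning trees oriented toward $x$ of the product of the edge weights $\omega_e$ along the tree. The relation $\omegach_\ech=\frac{\pi(\underline e)}{\pi(\overline e)}\omega_e$ then makes $\omegach$ an explicit rational map, and the heart of the matter is to show that the density of $\bP^{(\alpha)}$ at $\omega$, namely $\prod_x \frac{\Gamma(\alpha_x)}{\prod_{\underline e=x}\Gamma(\alpha_e)}\prod_{\underline e=x}\omega_e^{\alpha_e-1}$ times the appropriate surface measure, transforms under this map exactly into the density of $\bP^{(\alphach)}$ at $\omegach$. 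Rather than computing the full Jacobian determinant by brute force, I would look for the slicker route used in~\cite{Sabot09} (and reproved probabilistically in~\cite{SabotTournier10}): either (a) identify both sides as the annealed law of a reinforced-type process and invoke Lemma~\ref{lem:cycle} together with a de Finetti / martingale-convergence argument to recover the environment law from finite-path statistics, or (b) use the fact that a Dirichlet environment can be realized from independent Gamma variables $G_e\sim\mathrm{Gamma}(\alpha_e)$ by $\omega_e = G_e/\sum_{\underline f=\underline e}G_f$, and track how the time-reversal acts on the $G_e$'s (one expects $\Gch_\ech$ to again be independent Gammas with parameters $\alpha_e$ after an appropriate rescaling tied to $\pi$, so that the induced environment is Dirichlet with the transported parameters). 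Approach (a) keeps the paper self-contained given what has already been recalled.

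The main obstacle is the Jacobian / normalization bookkeeping: one must show that all the spanning-tree polynomials $\pi(x)$ that appear with various powers in the change of variables cancel exactly, leaving only the clean product of Dirichlet densities with the swapped parameters, and that the induced transformation of the reference (Lebesgue/surface) measure contributes precisely the missing power to make this work. This is the kind of miraculous cancellation that is genuinely easier to see through the reinforced-walk interpretation (Lemma~\ref{lem:cycle} already encodes exactly this cancellation at the level of finite paths) than through a direct determinant calculation, which is why I would lean on approach (a): fix a large finite strongly connected graph, use Lemma~\ref{lem:cycle} to match path probabilities of the $\alpha$-reinforced walk with those of the $\alphach$-reinforced reversed walk, pass to the limit to recover the environment (the empirical transition frequencies converge a.s.\ to $\omega$, resp.\ $\omegach$, by the Polya urn / de Finetti structure), and conclude $\mathcal L_{\bP^{(\alpha)}}(\omegach)=\bP^{(\alphach)}$. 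I would then note that the statement as used later only requires $G$ finite, so no extra care about infinite graphs is needed here.
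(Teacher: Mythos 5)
The paper does not actually prove this lemma itself --- it defers to~\cite{Sabot09} and~\cite{SabotTournier10} --- so there is no in-paper proof to compare against. That said, your plan correctly identifies the two known arguments: approach~(b) is essentially the change-of-variables proof of~\cite{Sabot09}, and approach~(a), which you favour, is essentially the probabilistic reproof of~\cite{SabotTournier10}. The structure of~(a) is sound, but the ``pass to the limit'' step needs to be made precise. As written, ``the empirical transition frequencies converge a.s.\ to $\omega$, resp.\ $\omegach$'' conflates two different observables: the \emph{forward} empirical transition frequencies converge to $\omega$, whereas $\omegach$ is obtained from the \emph{backward} ones. The statistic that Lemma~\ref{lem:cycle} actually transports cleanly under $\sigma\mapsto\sigmach$ is the vector of edge-occupation counts $(n_e(\sigma))_{e\in E}$, since $n_e(\sigma)=n_\ech(\sigmach)$. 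Normalized along return times to $x_0$, these counts converge $P^\omega_{x_0}$-a.s.\ to the edge currents $\big(\pi(\underline e)\,\omega_e\big)_{e\in E}$, from which $\omega$ is read off by conditioning each current on the \emph{tail} of the edge and $\omegach$ by conditioning on the \emph{head}. Lemma~\ref{lem:cycle} (applied to all closed excursions, then letting the number of excursions grow) equates the annealed law of this current vector under $P^{(\alpha)}_{x_0}$, indexed over $E$, with that under $P^{(\alphach)}_{x_0}$, indexed over $\Ech$; pushing both sides through the head-conditioning map then yields exactly $\mathcal L_{\bP^{(\alpha)}}(\omegach)=\bP^{(\alphach)}$. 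Two minor remarks: you need not assume strong connectivity separately, since $\mathrm{div}(\alpha)=0$ with all $\alpha_e>0$ already forces each weakly connected component of $G$ to be strongly connected; and the ``reinforced walk'' device is only used here to justify the exchangeability/de Finetti step, so once you make the statistic explicit the argument is entirely a statement about annealed Markov chains and needs no further appeal to reinforcement.
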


Let us follow the same steps as for the annealed identity, now with fixed $N=1$, which we omit from indices. 

Let $L\in\N^*$. We have, for any environment $\omega$ on $G_{L}$,
\begin{align*} 
P^\omega_{\omega(\partial,\cdot)}(H_R<H_\partial)
	&= P^\omega_\partial(H_R<H_\partial)= 1- P^\omega_\partial(H^+_\partial<H_R).
\end{align*}
Furthermore, the latter event involves cycles and, considering the reversed cycles, we have
\[P^\omega_\partial(H^+_\partial<H_R) = P^\omegach_\partial(H^+_\partial<H_R),\]
as a consequence of two facts: first, the set of cycles in the left event is globally unchanged after reversal, and second the probability of a cycle in $\omega$ is equal to the probability of its reverse in $\omegach$, as a consequence of the definition of $\omegach$. 

For any environment $\omega$ on $G_{L}$, $\omegach$ is an environment on $\Gch_{L}$ hence we may decompose as before, applying Markov property at time 1, 
\[P^\omegach_\partial(H^+_\partial<H_R)=(1-\omegach_{(\partial,R)})P^\omegach_{\omegach(\partial,\cdot)_{|\mathcal L}}(H_\partial<H_R)\]
where $\omegach(\partial,\cdot)_{|\mathcal L}$ is the law of $X_1$ under $P_\partial^\omegach$ conditioned on $\{X_1\neq R\}$. By Lemma~\ref{lem:cycle2}, under $\bP^{(\alpha^{L})}$, $\omegach\sim\bP^{(\alphach^{L})}$. As a consequence, and because of the ``restriction property'' of Dirichlet distribution (cf.~for instance~\cite{Tournier09}), under $\bP^{(\alpha^{L})}$, $1-\omegach_{(\partial,R)}$ is independent of $\omegach(\partial,\cdot)_{|\mathcal L}$ and the latter follows a Dirichlet distribution with parameters $\alphach_{(\partial,\cdot)}$. 
On the other hand, under $\bP^{(\alpha^{L})}$, 
\begin{align*}
1-\omegach_{(\partial,R)}\sim{\rm Beta}\left(\alphach_{(\partial,\mathcal L)},\alphach_{(\partial,R)}\right)
	& ={\rm Beta}\left(\alpha_{(\mathcal L,\partial)},\alpha_{(R,\partial)}\right),
\end{align*}
which is the distribution in~\eqref{eq:identity_cylinder}.

Gathering everything, we obtain that the law under $\bP^{(\alpha^{L})}$ of $P^\omega_{\omega(\partial,\cdot)}(H_R<H_\partial)$ is the same as the law of
\begin{equation}\label{eqn:proof_cylinder}
1-(1-\omega_{(R,\partial)})\big(1-P^\omega_{\omega(\partial,\cdot)}(H_R<H_\partial)\big)
\end{equation}
under $\bP^{(\alphach^{L})}$ (note that here~$\omega$ is an environment on $\Gch$). Although this is not necessary, we may note that the two factors are independent, because the paths involved in the last event don't go out of vertex~$R$. 

As was noticed in the annealed proof, when $L$ goes to infinity, the expectation under $\bP^{(\alphach^{L})}$ of the last probability in~\eqref{eqn:proof_cylinder} goes to $P^{(\alpha)}_\much(\forall n,\,X_n\cdot\vecu\le0)=0$, hence the last probability under $\bP^{(\alphach^{L})}$ goes to $0$ in $L^1$ and thus in law. On the other hand, the law of $\omega_{(R,\partial)}$ under $\bP^{(\alphach^{L})}$ was shown above to be the Beta distribution from~\eqref{eq:identity_cylinder}, and thus does not depend on $L$.

We conclude that the law under $\bP^{(\alpha^{L})}$ of $P^\omega_{\omega(\partial,\cdot)}(H_R<H_\partial)$ converges to the Beta distribution given in~\eqref{eq:identity_cylinder}. This is the expected conclusion since, on the other hand, these quenched probabilities for growing $L$ can be expressed on the same cylinder $C$ and thus seen to converge as $L\to\infty$:
\[P^\omega_{\omega(\partial,\cdot)}(H_R<H_\partial)=P^\omega_{\omega(\partial,\cdot)}(T^\vecu_L<\widetilde{T}^\vecu_0)\limites{}{L\to\infty} P^\omega_{\omega(\partial,\cdot)}(\forall n, X_n\cdot\vecu\ge0\text{, and }\limsup_n X_n\cdot\vecu=+\infty).\] 
As before, the event $\{\limsup_n X_n\cdot\vecu=+\infty\}$ is $\bP^{(\alpha)}$-a.s.~included in $\{\forall n,X_n\cdot\vecu\ge0\}$ because of Lemma~4 of~\cite{Zerner-Merkl}.


\begin{thebibliography}{2}
   \bibitem[Bo12]{Bouchet} \textsc{Bouchet, E.} (2012) Sub-ballistic random walk in Dirichlet environment. \emph{Preprint. \htmladdnormallink{arXiv:1205.5709}{http://arxiv.org/abs/1205.5709}}
	\bibitem[ChLe91]{Chamayou-Letac} \textsc{Chamayou, J.-F. and Letac G.} (1991) Explicit stationary distributions for compositions of random functions and products of random matrices. \emph{J. Theoret. Probab.} 4, 3--36. \htmladdnormallink{MR1088391}{http://www.ams.org/mathscinet-getitem?mr=1088391}
   \bibitem[DrRa10]{DrewitzRamirez} \textsc{Drewitz, A. and Ram\'irez, A.} (2010) Asymptotic direction in random walks in random environment revisited.  \emph{Braz. J. Probab. Stat.} 24, no. 2, 212--225.  \htmladdnormallink{MR2643564}{http://www.ams.org/mathscinet-getitem?mr=MR2643564}
   \bibitem[EnSa02]{EnriquezSabot02} \textsc{Enriquez, N. and Sabot, C.} (2002) Edge oriented reinforced random walks and RWRE. \emph{C. R. Math. Acad. Sci. Paris} 335, no. 11, 941--946. \htmladdnormallink{MR1952554}{http://www.ams.org/mathscinet-getitem?mr=MR1952554}
   \bibitem[EnSa06]{EnriquezSabot06} \textsc{Enriquez, N. and Sabot, C.} (2006) Random walks in a Dirichlet environment. \emph{Electron. J. Probab.} 11, no. 31, 802--817 (electronic). \htmladdnormallink{MR2242664}{http://www.ams.org/mathscinet-getitem?mr=MR2242664}. 
   \bibitem[ZeMe01]{Zerner-Merkl} \textsc{Zerner, M. and Merkl, F.} (2001) A zero-one law for planar random walks in random environment. \emph{Ann. Probab.} 29, no. 4, 1716--1732. \htmladdnormallink{MR1880239}{http://www.ams.org/mathscinet-getitem?mr=MR1880239}
   \bibitem[Pe88]{Pemantle} \textsc{Pemantle, R.} (1988) Phase transition in reinforced random walk and RWRE on trees. \emph{Ann. Probab.} 16, no. 3, 1229--1241. \htmladdnormallink{MR0942765}{http://www.ams.org/mathscinet-getitem?mr=MR0942765}
   \bibitem[Sa04]{Sabot04} \textsc{Sabot, C.} (2004) Ballistic random walks in random environments at low disorder. \emph{Ann. Probab.} 32, no. 4, 2996--3023. \htmladdnormallink{MR2094437}{http://www.ams.org/mathscinet-getitem?mr=MR2094437}
   \bibitem[Sa09]{Sabot09} \textsc{Sabot, C.} (2009) Random walks in random Dirichlet environment are transient in dimension $d\geq 3$. \emph{Probab.\ Theory Relat.\ Fields} 151, nos.\ 1--2, 297--317. \htmladdnormallink{MR2834720}{http://www.ams.org/mathscinet-getitem?mr=MR2834720}
    \bibitem[Sa10]{Sabot10} \textsc{Sabot, C.} (2010) Random Dirichlet environment viewed from the particle in dimension $d\ge 3$. \emph{To be published in Annals of Probability}
    \bibitem[SaTo10]{SabotTournier10} \textsc{Sabot, C.\ and Tournier, L.} (2011) Reversed Dirichlet environment and directional transience of random walks in Dirichlet environment. \emph{Ann.~Inst.~Henri Poincar\'e Probab.~Stat.} 47, no.\ 1, 1--8.  \htmladdnormallink{MR2779393}{http://www.ams.org/mathscinet-getitem?mr=MR2779393}
    \bibitem[Si07]{Simenhaus} \textsc{Simenhaus, F.} (2007) Asymptotic direction for random walks in random environment. \emph{Ann.~Inst.\ Henri Poincar\'e Probab.\ Stat.} 43, no.\ 6, 751--761. 
   \bibitem[To09]{Tournier09} \textsc{Tournier, L.} (2009) Integrability of exit times and ballisticity for random walks in Dirichlet environment. \emph{Electron. J. Probab.} 14, no. 16, 431--451 (electronic). \htmladdnormallink{MR2480548}{http://www.ams.org/mathscinet-getitem?mr=MR2480548}
   \bibitem[Ze07]{Zerner} \textsc{Zerner, M.} (2007) The zero-one law for planar random walks in i.i.d. random environments revisited. \emph{Electron. Comm. Probab.}  12, 326--335 (electronic). \htmladdnormallink{MR2342711}{http://www.ams.org/mathscinet-getitem?mr=MR2342711} 
\end{thebibliography}
\end{document}